\renewcommand\eqref[1]{(\ref{#1})} 
\numberwithin{equation}{section}
\theoremstyle{plain}
\newtheorem{thm}{Theorem}[section]
\newtheorem{prop}[thm]{Proposition}
\newtheorem{cor}[thm]{Corollary}
\newtheorem{lem}[thm]{Lemma}
\theoremstyle{definition}
\renewcommand{\wp}{\mathfrak S}
\begin{document}

   \title[$L^{2}$-Caffarelli-Kohn-Nirenberg inequalities]
   {Anisotropic $L^{2}$-weighted Hardy and $L^{2}$-Caffarelli-Kohn-Nirenberg inequalities}

\author[M. Ruzhansky]{Michael Ruzhansky}
\address{
  Michael Ruzhansky:
  \endgraf
  Department of Mathematics
  \endgraf
  Imperial College London
  \endgraf
  180 Queen's Gate, London SW7 2AZ
  \endgraf
  United Kingdom
  \endgraf
  {\it E-mail address} {\rm m.ruzhansky@imperial.ac.uk}
  }
\author[D. Suragan]{Durvudkhan Suragan}
\address{
  Durvudkhan Suragan:
  \endgraf
  Institute of Mathematics and Mathematical Modelling
  \endgraf
  125 Pushkin str.
  \endgraf
  050010 Almaty
  \endgraf
  Kazakhstan
  \endgraf
  and
  \endgraf
  Department of Mathematics
  \endgraf
  Imperial College London
  \endgraf
  180 Queen's Gate, London SW7 2AZ
  \endgraf
  United Kingdom
  \endgraf
  {\it E-mail address} {\rm d.suragan@imperial.ac.uk}
  }

\thanks{The authors were supported in parts by the EPSRC
 grant EP/K039407/1 and by the Leverhulme Grant RPG-2014-02,
 as well as by the MESRK grant 5127/GF4. No new data was collected or generated during the course of research.}

     \keywords{Caffarelli-Kohn-Nirenberg inequality, Hardy inequality, weighted inequalities, homogeneous Lie group, sharp remainder}
     \subjclass[2010]{22E30, 43A80}

     \begin{abstract}
       We establish sharp remainder terms of the $L^{2}$-Caffarelli-Kohn-Niren\-berg inequalities
     on homogeneous groups, yielding the inequalities with best constants. Our methods also give new sharp Caffarelli-Kohn-Nirenberg type inequalities in $\mathbb{R}^{n}$ with arbitrary quasi-norms. We also present explicit examples to illustrate our results for different weights and in abelian cases.
     \end{abstract}
     \maketitle

\section{Introduction}
Recall the following weighted Hardy-Sobolev type inequalities due to Caffarelli, Kohn and Nirenberg \cite{CKN-1984}:
For all $f\in C_{0}^{\infty}(\mathbb{R}^{n})$ we have the estimate
\begin{equation}\label{CKN1}
\left(\int_{\mathbb{R}^{n}}\|x\|^{-p\beta}|f|^{ p} dx\right)^{\frac{2}{p}}\leq C_{\alpha,\beta}\int_{\mathbb{R}^{n}}\|x\|^{-2\alpha}|\nabla f|^{2}dx,
\end{equation}
where for $n\geq3$:
$$-\infty<\alpha<\frac{n-2}{2},\; \alpha\leq \beta\leq\alpha+1,\;{\rm and} \; p=\frac{2n}{n-2+2(\beta-\alpha)},$$
and for
$n=2$:
$$-\infty<\alpha<0,\; \alpha<\beta\leq\alpha+1,\;{\rm and}\; p=\frac{2}{\beta-\alpha},$$
and where $\|x\|=\sqrt{x^{2}_{1}+\ldots+x^{2}_{n}}$. Inequality \eqref{CKN1}
is now knows as the Caffarelli-Kohn-Nirenberg inequality.

Nowadays there exists an extensive literature on Caffarelli-Kohn-Nirenberg type inequalities and their applications. We refer to \cite{CW-2001}, \cite{WW-2003}, \cite{HK12} and a recent paper
\cite{CH16} for further discussions and references on this subject.
We also note that the analysis of the remainder terms in different inequalities has a long history, initiated by Brezis and Nirenberg in \cite[Corollary 1.1 and Remark 1.4]{Brez3} and then Brezis and Lieb \cite{Brez1} for Sobolev inequalities, Brezis and V\'azquez in \cite[Section 4]{Brez4} for Hardy inequalities, see also \cite{Brez2}, with many subsequent works in this subject, see e.g. \cite{ACR, WW-2003} and many others, and a more recent literature review in \cite{GM}. Here we will be interested in also obtaining some formulae for the remainder terms in the appearing inequalities.

In this note we are interested in the $L^{2}$ case ($p=2$) of \eqref{CKN1}, that is,
for all $f\in C_{0}^{\infty}(\mathbb{R}^{n}),$
\begin{equation}
\int_{\mathbb{R}^{n}}\|x\|^{-2(\alpha+1)}|f|^{2}dx\leq \widetilde{C}_{\alpha}\int_{\mathbb{R}^{n}}\|x\|^{-2\alpha}|\nabla f|^{2}dx,
\end{equation}
with any $n\geq2$ and $-\infty<\alpha<\frac{n-2}{2}$, which in turn can be presented for any $f\in C_{0}^{\infty}(\mathbb{R}^{n}\backslash\{0\})$ as
\begin{equation}\label{CKNinRn}
\left\|\frac{1}{\|x\|^{\alpha+1}}|f|\right\|_{L^{2}(\mathbb{R}^{n})}\leq C_{\alpha}\left\|\frac{1}{\|x\|^{\alpha}}|\nabla f|\right\|_{L^{2}(\mathbb{R}^{n})},
\end{equation}
for all $\alpha\in\mathbb{R}$.

\smallskip
The main goal of this note is to show analogues of \eqref{CKNinRn} on homogeneous (Lie) groups. We use some techniques from our recent preprint \cite{Ruzhansky-Suragan:identities}.  Although there is a certain overlap between these settings here we aim to explain that the obtained homogeneous group results are not only analogues of the known Euclidean results, but also they give new inequalities even in Abelian cases with arbitrary quasi-norms. We also shall note that our main result (see Theorem \ref{aHardy}) is an anisotropic generalisation of the classical $L^{2}$-weighted Hardy and $L^{2}$-Caffarelli-Kohn-Nirenberg inequalities, but it is not what should be called a `genuine' subelliptic version of the classical inequalities since we will not use a horizontal gradient. 
Indeed, since we do not ask for the group to be stratified, there may be neither sub-Laplacian in this
generality nor any `horizontal' gradients. We refer to \cite{Ruzhansky-Suragan:horizontal} for horizontal versions of these inequalities.

\smallskip

To the best of our knowledge, there is no example of a nilpotent Lie group that does not allow for any compatible family of dilations if the (topological) dimension of a nilpotent Lie group is less than nine. At the same time, it is known that the class of the homogeneous groups  is one of the most general subclasses
of the class of nilpotent Lie groups. However, it is also known that these classes are not equal since an example of a nine-dimensional nilpotent Lie group, which does not allow for any compatible family of dilations, was constructed by Dyer  \cite{Dyer-1970}. Most popular special cases of homogeneous groups 
are the isotropic and anisotropic Abelian groups $(\mathbb{R}^{n}; +)$, H-type groups, stratified groups, graded Lie groups and so on.

Before presenting our main results we discuss some necessary basic concepts of the homogeneous groups. 
We refer to the book \cite{FS-Hardy} by Folland and Stein as well as to the recent monograph \cite{FR} by Fischer and the first named author for further discussions in this direction.

A Lie group (on $\mathbb{R}^{n}$) $\mathbb{G}$ with the dilation
$$D_{\lambda}(x):=(\lambda^{\nu_{1}}x_{1},\ldots,\lambda^{\nu_{n}}x_{n}),\; \nu_{1},\ldots, \nu_{n}>0,\; D_{\lambda}:\mathbb{R}^{n}\rightarrow\mathbb{R}^{n},$$
which an automorphism of the group $\mathbb{G}$ for each $\lambda>0,$
is called a {\em homogeneous (Lie) group}. Throughout this note instead of $D_{\lambda}(x)$ we will use the simpler and shorter  notation $\lambda x$.

A continuous non-negative function
$$\mathbb{G}\ni x\mapsto |x|\in [0,\infty),$$
is called a {\em homogeneous quasi-norm} on a homogeneous group $\mathbb G$
if it has the properties:

\begin{itemize}
\item   $|x^{-1}| = |x|$ for all $x\in \mathbb{G}$,
\item  $|D_{\lambda}(x)|=\lambda |x|$ for all
$x\in \mathbb{G}$ and $\lambda >0$,
\item  $|x|= 0$ if and only if $x=0$.
\end{itemize}

Note that a family of linear mappings of the form
$$D_{\lambda}={\rm Exp}(A \,{\rm ln}\lambda)=\sum_{k=0}^{\infty}
\frac{1}{k!}({\rm ln}(\lambda) A)^{k},$$
where $A$ is a diagonalisable linear operator on the Lie algebra $\mathfrak{g}$ (of $\mathbb{G}$)
with eigenvalues $\nu_{k}$, is a morphism of the Lie algebra $\mathfrak{g}$ (of $\mathbb{G}$),
that is, a linear mapping
from $\mathfrak{g}$ to itself which respects the Lie bracket:
$$\forall X,Y\in \mathfrak{g},\, \lambda>0,\;
[D_{\lambda}X, D_{\lambda}Y]=D_{\lambda}[X,Y].$$

The Haar measure on $\mathbb{G}$ is denoted by $dx$. Recall that the Lebesque measure on $\mathbb R^{n}$ gives the Haar measure for $\mathbb{G}$ (see, for example, \cite[Proposition 1.6.6]{FR}).
If $|S|$ is the corresponding volume of a measurable set $S\subset \mathbb{G}$, then
we have the equalities
\begin{equation}
|D_{\lambda}(S)|=\lambda^{Q}|S| \quad {\rm and}\quad \int_{\mathbb{G}}f(D_{\lambda}(x))
dx=\lambda^{-Q}\int_{\mathbb{G}}f(x)dx.
\end{equation}
Here $Q$ is the homogeneous dimension (see \eqref{Qdef}) of the homogeneous group $\mathbb{G}$.
Moreover, there is a (unique)
positive Borel measure $\sigma$ on the
unit sphere
\begin{equation}\label{EQ:sphere}
\wp:=\{x\in \mathbb{G}:\,|x|=1\},
\end{equation}
such that for all $f\in L^{1}(\mathbb{G})$ we have the polar decomposition
\begin{equation}\label{EQ:polar}
\int_{\mathbb{G}}f(x)dx=\int_{0}^{\infty}
\int_{\wp}f(ry)r^{Q-1}d\sigma(y)dr.
\end{equation}
We refer to Folland and Stein \cite{FS-Hardy} for the proof, which can be also found in
\cite[Section 3.1.7]{FR}.
Let us fix a basis $\{X_{1},\ldots,X_{n}\}$ of the Lie algebra $\mathfrak{g}$ of the homogeneous group $\mathbb{G}$
such that
$$AX_{k}=\nu_{k}X_{k}$$
for each $1\leq k\leq n$, so that $A$ can be taken to be
\begin{equation}\label{EQ:mA0}
A={\rm diag}(\nu_{1},\ldots,\nu_{n}).
\end{equation}
Then each $X_{k}$ is homogeneous of degree $\nu_{k}$ and also
\begin{equation}\label{Qdef}
Q=\nu_{1}+\cdots+\nu_{n},
\end{equation}
which is called a homogeneous dimension of $\mathbb{G}$. Homogeneous groups are necessarily nilpotent and hence, in particular, the exponential mapping $\exp_{\mathbb G}:\mathfrak g\to\mathbb G$ is a global diffeomorphism.
The decomposition of ${\exp}_{\mathbb{G}}^{-1}(x)$ in the Lie algebra $\mathfrak g$ defines the vector
$$e(x)=(e_{1}(x),\ldots,e_{n}(x))$$
with
$${\exp}_{\mathbb{G}}^{-1}(x)=e(x)\cdot \nabla_{X}\equiv\sum_{j=1}^{n}e_{j}(x)X_{j},$$
where $\nabla_{X}=(X_{1},\ldots,X_{n})$.
On the other hand, it says that
$$x={\exp}_{\mathbb{G}}\left(e_{1}(x)X_{1}+\ldots+e_{n}(x)X_{n}\right).$$
By the homogeneity property this implies
$$rx\equiv D_{r}(x)={\exp}_{\mathbb{G}}\left(r^{\nu_{1}}e_{1}(x)X_{1}+\ldots
+r^{\nu_{n}}e_{n}(x)X_{n}\right),$$
that is,
$$
e(rx)=(r^{\nu_{1}}e_{1}(x),\ldots,r^{\nu_{n}}e_{n}(x)).
$$
Thus, since $r>0$ is arbitrary, without loss of generality taking $|x|=1$, we obtain
\begin{equation}
\frac{d}{d|rx|}f(rx) =  \frac{d}{dr}f({\exp}_{\mathbb{G}}
\left(r^{\nu_{1}}e_{1}(x)X_{1}+\ldots
+r^{\nu_{n}}e_{n}(x)X_{n}\right)).
\end{equation}
Denoting by
\begin{equation}\label{EQ:Euler}
\mathcal{R} :=\frac{d}{dr},
\end{equation}
for all $x\in \mathbb G$ this gives the equality
\begin{equation}\label{dfdr}
\frac{d}{d|x|}f(x)=\mathcal{R}f(x),
\end{equation}
for each homogeneous quasi-norm $|x|$ on a homogeneous group $\mathbb G.$
Thus, the operator $\mathcal{R}$ plays the role of the radial derivative on $\mathbb{G}$. It is not difficult to see that $\mathcal{R}$ is homogeneous of order -1.

In Section \ref{SEC:2} we give our main results and their proofs. In Section \ref{SEC:3} we discuss some special cases.

The authors would like to thank Haim Brezis for drawing our attention to some aspects of the subject.

\section{Main results and their proofs}
\label{SEC:2}

In this section we establish the $L^{2}$ version of the Caffarelli-Kohn-Nirenberg inequality on the homogeneous group
$\mathbb{G}$. This will be the consequence of the following exact remainder formula which we believe to be new already in the setting of the Euclidean space.
As we will observe, this equality readily implies an 
$L^{2}$-weighted version of the Hardy inequality which 
in this context coincides with the $L^{2}$-Caffarelli-Kohn-Nirenberg inequality. We also note that the results below hold {\em for arbitrary homogeneous quasi-norms on $\mathbb{G},$} this yielding new insights already in the Euclidean setting of $\mathbb{G}=\mathbb R^{n}$.

\begin{thm}\label{aHardy}
Let $\mathbb{G}$ be a homogeneous group
of homogeneous dimension $Q\geq 3$.
Then for every complex-valued function $f\in C^{\infty}_{0}(\mathbb{G}\backslash\{0\})$
and any homogeneous quasi-norm $|\cdot|$ on $\mathbb{G}$ we have
\begin{multline}\label{awH}
\left\|\frac{1}{|x|^{\alpha}}\mathcal{R} f\right\|^{2}_{L^{2}(\mathbb{G})}-\left(\frac{Q-2}{2}-\alpha\right)^{2}
\left\|\frac{f}{|x|^{\alpha+1}}\right\|^{2}_{L^{2}(\mathbb{G})}
\\=\left\|\frac{1}{|x|^{\alpha}}\mathcal{R} f+\frac{Q-2-2\alpha}{2|x|^{\alpha+1}}f
\right\|^{2}_{L^{2}(\mathbb{G})}
\end{multline}
for all $\alpha\in\mathbb{R}.$
\end{thm}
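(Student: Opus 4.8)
The plan is to prove \eqref{awH} by expanding the right-hand side as a single squared $L^{2}$-norm and verifying that the resulting cross term reproduces exactly the deficit on the left. Abbreviating $A=\frac{1}{|x|^{\alpha}}\mathcal{R}f$, $B=\frac{f}{|x|^{\alpha+1}}$, and $c=\frac{Q-2-2\alpha}{2}=\frac{Q-2}{2}-\alpha$, the right-hand side equals $\|A\|_{L^{2}}^{2}+2c\,\mathrm{Re}\langle A,B\rangle_{L^{2}}+c^{2}\|B\|_{L^{2}}^{2}$, so the identity \eqref{awH} is equivalent to the scalar statement $\mathrm{Re}\langle A,B\rangle=-c\|B\|^{2}$ (the case $c=0$ being immediate). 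Written out, what must be shown is
\begin{equation*}
\mathrm{Re}\int_{\mathbb{G}}\frac{(\mathcal{R}f)\,\overline{f}}{|x|^{2\alpha+1}}\,dx
=-\frac{Q-2-2\alpha}{2}\int_{\mathbb{G}}\frac{|f|^{2}}{|x|^{2\alpha+2}}\,dx.
\end{equation*}

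First I would use that $\mathcal{R}$ is a real first-order operator, so $\mathrm{Re}\big((\mathcal{R}f)\overline{f}\big)=\tfrac12\mathcal{R}(|f|^{2})$; this converts the target into
\begin{equation*}
\int_{\mathbb{G}}\frac{\mathcal{R}(|f|^{2})}{|x|^{2\alpha+1}}\,dx
=-(Q-2-2\alpha)\int_{\mathbb{G}}\frac{|f|^{2}}{|x|^{2\alpha+2}}\,dx.
\end{equation*}
The key tools are then the polar decomposition \eqref{EQ:polar} and the identification \eqref{dfdr} of $\mathcal{R}$ with the radial derivative $d/dr$. Setting $u(r,y)=|f(ry)|^{2}$ for $y\in\wp$ and using $|ry|=r$, the left-hand integral becomes $\int_{\wp}\int_{0}^{\infty}(\partial_{r}u)\,r^{Q-2-2\alpha}\,dr\,d\sigma(y)$, and I would integrate by parts in the variable $r$.

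The boundary terms produced by this integration by parts vanish precisely because $f\in C_{0}^{\infty}(\mathbb{G}\backslash\{0\})$, so $u(r,y)$ is supported away from both $r=0$ and $r=\infty$; this is exactly where the puncture at the origin is used and where one must be a little careful. What survives is $-(Q-2-2\alpha)\int_{\wp}\int_{0}^{\infty}u\,r^{Q-3-2\alpha}\,dr\,d\sigma(y)$, which by \eqref{EQ:polar} equals $-(Q-2-2\alpha)\int_{\mathbb{G}}|f|^{2}/|x|^{2\alpha+2}\,dx$, closing the scalar identity and hence \eqref{awH}.

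I do not expect a serious obstacle here: the argument is an integration by parts in polar coordinates, and the only genuine points of care are the correct handling of real parts in the complex-valued case (resolved by $\mathrm{Re}((\mathcal{R}f)\overline{f})=\tfrac12\mathcal{R}|f|^{2}$) and the vanishing of the boundary contributions, which is guaranteed by the compact support of $f$ in $\mathbb{G}\backslash\{0\}$. I would also remark that the identity itself is valid for every real $\alpha$ and every $Q$, as its proof uses no convergence restriction; the hypothesis $Q\geq 3$ enters only to make the constant $\big(\tfrac{Q-2}{2}-\alpha\big)^{2}$ and the ensuing $L^{2}$-weighted Hardy / Caffarelli--Kohn--Nirenberg inequality (obtained by discarding the nonnegative square on the right of \eqref{awH}) meaningful and sharp.
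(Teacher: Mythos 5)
Your proof is correct, and it is organized more directly than the paper's. The paper proceeds in two stages: it first establishes the case $\alpha=0$ (via the identity $\|v\|_{L^{2}}^{2}={\rm Re}\int_{\mathbb{G}}v\overline{u}\,dx$ with $u=-\tfrac{2}{Q-2}\mathcal{R}f$, $v=f/|x|$, obtained by the same polar-coordinate integration by parts you use), and then reduces general $\alpha$ to that case through the Leibniz rule $\tfrac{1}{|x|^{\alpha}}\mathcal{R}f=\mathcal{R}\bigl(f/|x|^{\alpha}\bigr)+\alpha f/|x|^{\alpha+1}$, which costs a second integration by parts to handle the resulting cross term. You instead expand the square on the right-hand side of \eqref{awH} once and verify the single scalar identity ${\rm Re}\langle A,B\rangle=-c\|B\|^{2}$ by one integration by parts; the underlying computation (polar decomposition \eqref{EQ:polar}, the identification \eqref{dfdr}, $\,{\rm Re}((\mathcal{R}f)\overline{f})=\tfrac12\mathcal{R}|f|^{2}$, vanishing boundary terms from the support condition) is identical to the paper's, but you avoid both the detour through $\alpha=0$ and the Leibniz-rule bookkeeping. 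A side benefit you correctly point out: your route never divides by $Q-2$, so it makes transparent that the identity \eqref{awH} itself needs no restriction on $Q$, whereas the paper's intermediate formula \eqref{EQ:formula1} formally requires $Q\neq 2$; the hypothesis $Q\geq 3$ only matters for the resulting inequality to be nontrivial and sharp.
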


As a consequence, we obtain the $L^{2}$-Caffarelli-Kohn-Nirenberg type
inequality on the homogeneous group $\mathbb{G},$ with the sharp constant:
\begin{cor}\label{waHardy}
Let $\mathbb{G}$ be a homogeneous group
of homogeneous dimension $Q\geq 3$.
Then for all complex-valued functions $f\in C^{\infty}_{0}(\mathbb{G}\backslash\{0\})$ we have
\begin{equation}\label{awHardyeq-g}
\frac{|Q-2-2\alpha|}{2}\left\|\frac{f}{|x|^{\alpha+1}}\right\|_{L^{2}(\mathbb{G})}\leq
\left\|\frac{1}{|x|^{\alpha}}\mathcal{R} f\right\|_{L^{2}(\mathbb{G})},\quad \forall\alpha\in\mathbb R.
\end{equation}
If $\alpha\neq\frac{Q-2}{2}$, then constant in \eqref{awHardyeq-g} is sharp for any homogeneous quasi-norm $|\cdot|$ on $\mathbb{G}$, and the inequality \eqref{awHardyeq-g} is attained if and only if $f=0$.
\end{cor}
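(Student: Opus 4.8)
The plan is to derive the Corollary as an immediate consequence of the exact remainder formula in Theorem~\ref{aHardy}. Starting from the identity \eqref{awH}, I would observe that the right-hand side is a squared $L^{2}$-norm and hence nonnegative. Rearranging, this gives
\begin{equation*}
\left(\frac{Q-2}{2}-\alpha\right)^{2}\left\|\frac{f}{|x|^{\alpha+1}}\right\|^{2}_{L^{2}(\mathbb{G})}\leq \left\|\frac{1}{|x|^{\alpha}}\mathcal{R}f\right\|^{2}_{L^{2}(\mathbb{G})}.
\end{equation*}
Taking square roots and noting that $\left|\frac{Q-2}{2}-\alpha\right|=\frac{|Q-2-2\alpha|}{2}$ yields \eqref{awHardyeq-g} directly, valid for all $\alpha\in\mathbb{R}$.

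For the sharpness of the constant when $\alpha\neq\frac{Q-2}{2}$, the key point is again the remainder formula: equality in \eqref{awHardyeq-g} holds precisely when the right-hand side of \eqref{awH} vanishes, i.e.\ when
\begin{equation*}
\frac{1}{|x|^{\alpha}}\mathcal{R}f+\frac{Q-2-2\alpha}{2|x|^{\alpha+1}}f=0
\end{equation*}
almost everywhere. Using \eqref{dfdr}, this is an ODE in the radial variable along each ray, namely $\mathcal{R}f=-\frac{Q-2-2\alpha}{2|x|}f$, whose solutions are (up to a factor constant on the unit sphere $\wp$) of the form $f(x)=c(y)|x|^{-\frac{Q-2-2\alpha}{2}}$ where $x=|x|y$ with $y\in\wp$. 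I would then argue that no nonzero such $f$ can lie in $C^{\infty}_{0}(\mathbb{G}\backslash\{0\})$: a power of $|x|$ has noncompact support and, moreover, cannot be compactly supported away from the origin unless it is identically zero. Hence equality forces $f=0$, which establishes the last assertion of the Corollary.

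To show the constant $\frac{|Q-2-2\alpha|}{2}$ cannot be improved, I would construct a sequence of admissible test functions $f_{\varepsilon}\in C^{\infty}_{0}(\mathbb{G}\backslash\{0\})$ approximating the extremiser $|x|^{-\frac{Q-2-2\alpha}{2}}$, so that the remainder term on the right of \eqref{awH} tends to zero relative to the two norms appearing in \eqref{awHardyeq-g}. Concretely, I would truncate the formal extremiser with smooth cutoffs near $0$ and near $\infty$ and insert it into \eqref{awH}; using the polar decomposition \eqref{EQ:polar} reduces all three integrals to one-dimensional integrals in $r$ with the weight $r^{Q-1}$, after which one checks that the ratio
\begin{equation*}
\frac{\left\|\frac{1}{|x|^{\alpha}}\mathcal{R}f_{\varepsilon}\right\|_{L^{2}(\mathbb{G})}}{\left\|\frac{f_{\varepsilon}}{|x|^{\alpha+1}}\right\|_{L^{2}(\mathbb{G})}}\longrightarrow \frac{|Q-2-2\alpha|}{2}.
\end{equation*}

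The routine part is the reduction via \eqref{EQ:polar} and the sign argument for the lower bound, which follow mechanically from Theorem~\ref{aHardy}. The main obstacle I expect is the sharpness construction: one must design the cutoff sequence $f_{\varepsilon}$ carefully so that the contributions of the derivatives of the cutoffs to the numerator are negligible in the limit, since the formal extremiser itself is not admissible (it is neither compactly supported nor bounded away from the singularity at the origin). Controlling these boundary layer errors near $0$ and $\infty$ simultaneously—using the exponent $-\frac{Q-2-2\alpha}{2}$ and the condition $Q\geq 3$ together with $\alpha\neq\frac{Q-2}{2}$ to guarantee the relevant integrals in $r$ converge at the truncation scales—is the delicate step, and it is precisely here that the explicit remainder formula pays off by making the error estimate transparent.
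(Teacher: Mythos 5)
Your proposal is correct and follows the same basic route as the paper: drop the nonnegative remainder in \eqref{awH} to get the inequality, and characterise equality by the vanishing of the remainder term. The differences are minor but worth noting. For non-attainment, the paper invokes Lemma \ref{L:Euler} to conclude that an extremiser would have to be positively homogeneous of order $-\frac{Q}{2}+1+\alpha$, and then observes that $\frac{f}{|x|^{1+\alpha}}$ homogeneous of order $-\frac{Q}{2}$ is impossible (it cannot be a nonzero element of $L^{2}$, as the radial integral is $\int_{0}^{\infty}r^{-1}\,dr$); you instead integrate the radial ODE directly to get $f(x)=c(y)|x|^{-\frac{Q-2-2\alpha}{2}}$ and rule it out by compact support of $f\in C^{\infty}_{0}(\mathbb{G}\backslash\{0\})$ along rays. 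Both arguments are valid, and yours is arguably more self-contained since it does not route through the Euler-operator lemma. For sharpness, the paper simply asserts that the constant is sharp after establishing non-attainment, leaning implicitly on the exactness of the identity; you go further and sketch the standard approximating-sequence argument with smooth truncations of the formal extremiser. That sketch is sound --- with $f=|x|^{-\frac{Q-2-2\alpha}{2}}$ all three integrals reduce in polar coordinates to $\int r^{-1}\,dr$, so the main terms diverge logarithmically in the truncation parameter while the cutoff contributions stay bounded, forcing the ratio to the claimed constant --- though you leave the details unexecuted. In short: your lower-bound and non-attainment arguments match the paper; your sharpness plan supplies a justification the paper leaves implicit, and it would go through.
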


In the Abelian case ${\mathbb G}=(\mathbb R^{n},+)$, $n\geq 3$, we have
$Q=n$, $e(x)=x=(x_{1},\ldots,x_{n})$, so for any homogeneous quasi-norm $|\cdot|$ on $\mathbb R^{n}$, \eqref{awHardyeq-g} implies
a new inequality with the optimal constant:
\begin{equation}\label{Hardy1-r}
\frac{|n-2-2\alpha|}{2}\left\|\frac{f}{|x|^{\alpha+1}}\right\|_{L^{2}(\mathbb{R}^{n})}\leq
\left\|\frac{1}{|x|^{\alpha}}\frac{df}{d|x|}\right\|_{L^{2}(\mathbb{R}^{n})},
\end{equation}
for all $\alpha\in\mathbb{R}$. We observe that this inequality holds for any homogeneous quasi-norm on $\mathbb R^{n}$.
For the standard Euclidean distance $\|x\|=\sqrt{x^{2}_{1}+\ldots+x^{2}_{n}}$, by using Schwarz's inequality, this implies the $L^{2}$-Caffarelli-Kohn-Nirenberg inequality \cite{CKN-1984} for $\mathbb{G}\equiv\mathbb{R}^{n}$ with the optimal constant:
\begin{equation}\label{CKN}
\frac{|n-2-2\alpha|}{2}\left\|\frac{f}{\|x\|^{\alpha+1}}\right\|_{L^{2}(\mathbb{R}^{n})}\leq
\left\|\frac{1}{\|x\|^{\alpha}}\nabla f\right\|_{L^{2}(\mathbb{R}^{n})},\;\forall\alpha\in\mathbb{R},
\end{equation}
for all $f\in C_{0}^{\infty}(\mathbb{R}^{n}\backslash\{0\}).$
Here optimality $\frac{|n-2-2\alpha|}{2}$ of the constant in \eqref{CKN} was proved in \cite[Theorem 1.1. (ii)]{CW-2001} for $\alpha<\frac{n-2}{2}$ and $f\in H_{0}^{1}(\mathbb{R}^{n}\backslash\{0\}).$
Note in the case of the Hardy inequality
$\alpha=0$, inequality \eqref{Hardy1-r} with the Euclidean distance, i.e. $|x|=\|x\|$, can be also obtained as a direct consequence
of supersolution construction and
Agmon-Allegretto-Piepenbrink theory (see \cite[Prop.
4.2 and Lemma 5.1]{DFP-2014}).

\begin{proof}[Proof of Theorem \ref{aHardy}]

First let us prove the case when $\alpha=0$.
Namely, if $f\in C_{0}^{\infty}(\mathbb{G}\backslash\{0\})$ is a complex-valued function then, for
$Q\geq 3$, we have
\begin{equation}\label{EQ:expL2}
\left\|\mathcal{R} f\right\|^{2}_{L^{2}(\mathbb{G})}=
\left(\frac{Q-2}{2}\right)^{2}\left\|\frac{f}{|x|}\right\|^{2}_{L^{2}(\mathbb{G})}+
\left\|\mathcal{R} f+\frac{Q-2}{2}\frac{f}{|x|}\right\|^{2}_{L^{2}(\mathbb{G})}.
\end{equation}

Introducing polar coordinates $(r,y)=(|x|, \frac{x}{\mid x\mid})\in$ $(0,\infty)\times\wp$ on $\mathbb{G}$, where $\wp$ is the pseudo-sphere in \eqref{EQ:sphere}, and using the integration formula \eqref{EQ:polar}
one calculates
$$
\int_{\mathbb{G}}
\frac{|f(x)|^{2}}
{|x|^{2}}dx
=\int_{0}^{\infty}\int_{\wp}
\frac{|f(ry)|^{2}}
{r^{2}}r^{Q-1}d\sigma(y)dr
$$
$$
=-\frac{2}{Q-2}\int_{0}^{\infty} r^{Q-2} \,{\rm Re}\int_{\wp}
f(ry) \overline{\frac{df(ry)}{dr}}d\sigma(y)dr
$$
\begin{equation}\label{EQ:formula1}
=-\frac{2}{Q-2} {\rm Re}\int_{\mathbb{G}}
\frac{f(x)}{|x|}
\overline{\frac{d}{d|x|}f(x)}dx.
\end{equation}
Using notations
$$u:=u(x)=-\frac{2}{Q-2}\mathcal{R} f,$$
and
$$v:=v(x)=\frac{f}{|x|},$$
formula \eqref{EQ:formula1}
can be restated as
\begin{equation}\label{vu3}
\|v\|_{L^{2}(\mathbb{G})}^{2}={\rm Re}\int_{\mathbb{G}}v \overline{u} dx.
\end{equation}
Then we have
\begin{multline}
\|u\|_{L^{2}(\mathbb{G})}^{2}-\|v\|_{L^{2}(\mathbb{G})}^{2}=
\|u\|_{L^{2}(\mathbb{G})}^{2}-\|v\|_{L^{2}(\mathbb{G})}^{2}\\+
2\int_{\mathbb{G}} (|v|^{2}-{\rm Re}\, v \overline{u}) dx
=
\int_{\mathbb{G}} (|u|^{2}+|v|^{2}-2{\rm Re}\, v \overline{u}) dx
\\=\int_{\mathbb{G}} |u-v|^{2} dx,
\end{multline}
which gives \eqref{EQ:expL2}.

Now we note the equality, for any $\alpha\in\mathbb{R}$,
\begin{equation}\label{EQ:eqE}
\frac{1}{|x|^{\alpha}}\mathcal{R} f=\mathcal{R} \frac{f}{|x|^{\alpha}}
+\alpha \frac{f}{|x|^{\alpha+1}}.
\end{equation}
Indeed, this follows from
$$
\mathcal{R} \frac{f}{|x|^{\alpha}}=\frac{1}{|x|^{\alpha}}\mathcal{R} f+f \mathcal{R}  \frac{1}{|x|^{\alpha}}
$$
and using \eqref{dfdr},
$$
\mathcal{R}  \frac{1}{|x|^{\alpha}}=\frac{d}{dr}\frac{1}{r^{\alpha}}=-\alpha\frac{1}{r^{\alpha+1}}=
-\alpha\frac{1}{|x|^{\alpha+1}},\quad r=|x|.
$$
Then we can write
\begin{multline}
\left\|\frac{1}{|x|^{\alpha}}\mathcal{R} f\right\|^{2}_{L^{2}(\mathbb{G})}
=\left\|\mathcal{R} \frac{f}{|x|^{\alpha}}
+\frac{\alpha f}{|x|^{\alpha+1}}
\right\|^{2}_{L^{2}(\mathbb{G})}
\\=\left\|\mathcal{R} \frac{f}{|x|^{\alpha}}
\right\|^{2}_{L^{2}(\mathbb{G})}+2\alpha{\rm Re}\int_{\mathbb{G}}
\mathcal{R} \left(\frac{f}{|x|^{\alpha}}\right)
\frac{\overline{f}}{|x|^{\alpha+1}}dx
+\left\|\frac{\alpha f}{|x|^{\alpha+1}}
\right\|^{2}_{L^{2}(\mathbb{G})}.
\end{multline}
By \eqref{EQ:expL2} with $f$ replaced by $\frac{f}{|x|^{\alpha}}$, we have
using \eqref{EQ:eqE} that
\begin{multline}
\left\|\mathcal{R} \frac{f}{|x|^{\alpha}}
\right\|^{2}_{L^{2}(\mathbb{G})}=\left(\frac{Q-2}{2}\right)^{2}\left\|\frac{f}{|x|^{1+\alpha}}
\right\|^{2}_{L^{2}(\mathbb{G})}
\\+\left\|\frac{1}{|x|^{\alpha}}\mathcal{R} f+\frac{Q-2-2\alpha}{2|x|^{\alpha+1}}f
\right\|^{2}_{L^{2}(\mathbb{G})}.
\end{multline}
Introducing polar coordinates $(r,y)=(|x|, \frac{x}{\mid x\mid})\in (0,\infty)\times\wp$ on $\mathbb{G}$ and using formula \eqref{EQ:polar} for polar coordinates, we calculate
\begin{align*}
2\alpha{\rm Re}\int_{\mathbb{G}}
\mathcal{R} \left(\frac{f}{|x|^{\alpha}}\right)
\frac{\overline{f}}{|x|^{\alpha+1}}dx
& =2 \alpha{\rm Re}\int_{0}^{\infty}r^{Q-2}\int_{\wp}
\frac{d}{dr}\left(\frac{f(ry)}
{r^{\alpha}}\right)\frac{\overline{f(ry)}}{r^{\alpha}}d\sigma(y)dr
\\
&=\alpha\int_{0}^{\infty}r^{Q-2}\int_{\wp}
\frac{d}{dr}\left(\frac{|f(ry)|^{2}}
{r^{2\alpha}}\right)d\sigma(y)dr
\\&=-\alpha(Q-2)\left\|\frac{f}{|x|^{\alpha+1}}
\right\|^{2}_{L^{2}(\mathbb{G})}.
\end{align*}
Summing up all above we obtain
\begin{multline}
\left\|\frac{1}{|x|^{\alpha}}\mathcal{R} f\right\|^{2}_{L^{2}(\mathbb{G})}=
\left(\frac{Q-2}{2}-\alpha\right)^{2}\left\|\frac{f}{|x|^{\alpha+1}}
\right\|^{2}_{L^{2}(\mathbb{G})}
\\+\left\|\frac{1}{|x|^{\alpha}}\mathcal{R} f+\frac{Q-2-2\alpha}{2|x|^{\alpha+1}}f
\right\|^{2}_{L^{2}(\mathbb{G})},
\end{multline}
yielding \eqref{awH}.
\end{proof}

Before we start proof of Corollary \ref{waHardy} let us record the version of the Euler operator on homogeneous groups.
\begin{lem}\label{L:Euler}
Define the Euler operator
\begin{equation}\label{EQ:def-Euler}
E:=|x| \mathcal{R}.
\end{equation}
If $f:\mathbb G\backslash \{0\}\to\mathbb R$ is continuously differentiable, then
$$
Ef=\nu f
 \; \textrm{ if and only if }\;
 f(D_{r} x)=r^{\nu} f(x)\;\; (\forall r>0, x\not=0).$$
\end{lem}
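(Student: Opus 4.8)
The plan is to prove Lemma \ref{L:Euler} by reducing the statement to a genuinely one-dimensional fact about the scaling variable $r$, exploiting the definition \eqref{EQ:def-Euler} together with the identification \eqref{dfdr} of $\mathcal{R}$ with the radial derivative $\frac{d}{dr}$. Recall that any nonzero $x$ can be written uniquely as $x=D_{s}(y)$ with $s=|x|>0$ and $y\in\wp$ (the unit sphere \eqref{EQ:sphere}), so that a function on $\mathbb{G}\backslash\{0\}$ is the same data as a function of the pair $(s,y)$. Under this polar picture the operator $E=|x|\mathcal{R}$ becomes $s\,\frac{d}{ds}$, acting only in the radial variable with $y$ held fixed. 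I will therefore verify both implications by freezing $y$ and working with the single-variable function $g(s):=f(D_{s}(y))$.

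First I would prove the backward implication. Assume $f(D_{r}x)=r^{\nu}f(x)$ for all $r>0$ and $x\neq0$. Applying \eqref{dfdr}, which tells us that $\mathcal{R}f(x)=\frac{d}{d|x|}f(x)$ for an arbitrary homogeneous quasi-norm, I compute $\mathcal{R}f(x)$ by differentiating the one-parameter orbit: writing $x=D_{s}(y)$ with $s=|x|$ and using the homogeneity relation $f(D_{r}(D_{s}(y)))=f(D_{rs}(y))=r^{\nu}f(D_{s}(y))$, differentiation in $r$ at $r=1$ yields $s\,\mathcal{R}f(x)=\nu f(x)$, that is $Ef=|x|\mathcal{R}f=\nu f$. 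For the forward implication, assume $Ef=\nu f$, i.e. $s\,\frac{d}{ds}g(s)=\nu g(s)$ along each radial orbit. This is a separable linear ODE in $s$ whose solution is $g(s)=c\,s^{\nu}$ with $c=g(1)=f(y)$, giving $f(D_{s}(y))=s^{\nu}f(y)$; replacing $y$ by $D_{t}(y)$ and relabelling then recovers the stated homogeneity $f(D_{r}x)=r^{\nu}f(x)$ for all $r>0$ and $x\neq0$.

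The one genuine point requiring care, and the step I expect to be the main obstacle, is making rigorous the passage between the operator $E=|x|\mathcal{R}$ defined intrinsically on $\mathbb{G}$ and the concrete radial derivative $s\,\frac{d}{ds}$ along dilation orbits. The identity \eqref{dfdr} supplies exactly the needed bridge, but one must be attentive to the fact that $|\cdot|$ is an \emph{arbitrary} homogeneous quasi-norm rather than a smooth Euclidean norm; the key structural input is the second quasi-norm axiom $|D_{\lambda}(x)|=\lambda|x|$, which guarantees that moving along a dilation orbit increases $|x|$ linearly in the dilation parameter, so that the change of variable from the abstract radial derivative to $\frac{d}{ds}$ is legitimate. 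Once this correspondence is secured the argument is routine, since both implications are then statements about the elementary function $s\mapsto s^{\nu}$ being the unique (up to scaling) homogeneous solution of the Euler equation $s g'(s)=\nu g(s)$.
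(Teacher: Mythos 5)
Your proposal is correct and follows essentially the same route as the paper: both directions reduce, via the identification \eqref{dfdr} of $\mathcal{R}$ with $\frac{d}{dr}$ along dilation orbits, to the one-dimensional Euler equation $s\,g'(s)=\nu g(s)$ for $g(s)=f(D_{s}(y))$, whose solutions are the multiples of $s^{\nu}$. The only cosmetic difference is that you parametrize orbits starting from the unit sphere $\wp$ and then relabel, whereas the paper fixes an arbitrary $x\neq 0$ and considers $g(r)=f(rx)$ directly.
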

\begin{proof}[Proof of Lemma \ref{L:Euler}]
If $f$ is positively homogeneous of order $\nu$, i.e. if $f(rx)=r^{\nu}f(x)$ holds for
all $r>0$ and $x\not=0$, then applying \eqref{dfdr} to such $f$ we get
$$
Ef=\nu f(x).
$$
Conversely, let us fix $x\not=0$ and define $g(r):=f(rx)$.
Using \eqref{dfdr}, the equality $Ef(rx)=\nu f(rx)$ means that
$$
g'(r)=\frac{d}{dr}f(rx)=\frac{1}{r} Ef(rx)=\frac{\nu}{r} f(rx)=\frac{\nu}{r}g(r).
$$
Consequently, $g(r)=g(1) r^{\nu}$, i.e. $f(rx)=r^{\nu} f(x)$ and thus $f$ is positively homogeneous of order $\nu$.
\end{proof}
\begin{proof}[Proof of Corollary \ref{waHardy}]
Let us argue that the constant $\frac{|Q-2-2\alpha|}{2}$ is sharp and never attained unless $f=0$.
If the equality in \eqref{awHardyeq-g} is attained, it follows that the terms on the left hand side of
\eqref{awH} are zero. That is, it means that
\begin{equation}\label{EQ:aux1}
\frac{1}{|x|^{\alpha}}\mathcal{R} f+\frac{Q-2-2\alpha}{2|x|^{\alpha+1}}f=0
\end{equation}
and hence $Ef=-\frac{Q-2-2\alpha}{2}f$. In view of Lemma \ref{L:Euler} the function $f$ must be positively homogeneous of order $-\frac{Q}{2}+1+\alpha$, that is, $\frac{f}{|x|^{1+\alpha}}$ must be positively homogeneous of order $-\frac{Q}{2}$  which is impossible, so that the constant is not attained unless $f=0$. Therefore, the constant $\frac{|Q-2-2\alpha|}{2}$ in \eqref{awHardyeq-g} is sharp.
\end{proof}

\section{Cases $\alpha=-1,\,0,\,1$}
\label{SEC:3}
In this section we consider special cases of \eqref{awH} when $\alpha=-1,\,0,\,1$ to illustrate importance of this general equality in different settings.

\subsection{The case $\alpha=-1$.} We have the following relation for the Euler operator
\begin{prop} For any $f\in L^{2}(\mathbb{G})$ with $E f\in L^{2}(\mathbb{G})$ we have
\begin{equation}\label{ECrel}
\left\|E f\right\|^{2}_{L^{2}(\mathbb{G})}=\left(\frac{Q}{2}\right)^{2}\left\|f
\right\|^{2}_{L^{2}(\mathbb{G})}+\left\|E f+\frac{Q}{2}f
\right\|^{2}_{L^{2}(\mathbb{G})}.
\end{equation}
\end{prop}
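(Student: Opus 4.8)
The plan is to recognise \eqref{ECrel} as the special case $\alpha=-1$ of the exact remainder formula \eqref{awH}. Recalling from \eqref{EQ:def-Euler} that $E=|x|\mathcal R$, I would substitute $\alpha=-1$ into \eqref{awH}: the weight $|x|^{-\alpha}=|x|$ turns $\frac{1}{|x|^{\alpha}}\mathcal R f$ into $|x|\mathcal R f=Ef$, the exponent $\alpha+1$ becomes $0$ so that $\frac{f}{|x|^{\alpha+1}}=f$, the coefficient $\frac{Q-2}{2}-\alpha$ becomes $\frac{Q}{2}$, and $\frac{Q-2-2\alpha}{2}$ likewise becomes $\frac{Q}{2}$. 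Thus for $f\in C_{0}^{\infty}(\mathbb G\backslash\{0\})$ formula \eqref{awH} reads
\begin{equation*}
\|Ef\|_{L^{2}(\mathbb G)}^{2}-\left(\frac{Q}{2}\right)^{2}\|f\|_{L^{2}(\mathbb G)}^{2}=\left\|Ef+\frac{Q}{2}f\right\|_{L^{2}(\mathbb G)}^{2},
\end{equation*}
which is precisely \eqref{ECrel}.

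Since the proposition is stated for the larger class of $f\in L^{2}(\mathbb G)$ with $Ef\in L^{2}(\mathbb G)$, the remaining task is to pass from $C_{0}^{\infty}(\mathbb G\backslash\{0\})$ to this class. A convenient self-contained route is to isolate the single scalar identity underlying the formula, namely
\begin{equation*}
{\rm Re}\int_{\mathbb G}(Ef)\overline{f}\,dx=-\frac{Q}{2}\int_{\mathbb G}|f|^{2}\,dx;
\end{equation*}
granting this, \eqref{ECrel} follows by the same algebraic expansion as in the proof of Theorem \ref{aHardy}, since with $u=Ef$ and $w=\frac{Q}{2}f$ one has $\|u\|^{2}-\|w\|^{2}=\|u+w\|^{2}$ as soon as ${\rm Re}\int u\overline{w}=-\|w\|^{2}$. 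To obtain the scalar identity I would introduce polar coordinates $(r,y)=(|x|,x/|x|)$ and use \eqref{EQ:polar}: by \eqref{dfdr} and $|y|=1$ one has $Ef(ry)=r\frac{d}{dr}f(ry)$, so that ${\rm Re}\bigl[r\frac{df}{dr}\overline f\bigr]=\tfrac12 r\frac{d}{dr}|f|^{2}$. Integrating by parts in $r$ against the weight $r^{Q-1}$, the extra factor $r$ combines to give $\tfrac12\int_{\wp}\int_{0}^{\infty}r^{Q}\frac{d}{dr}|f|^{2}\,dr\,d\sigma=-\frac{Q}{2}\int_{\mathbb G}|f|^{2}\,dx$.

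The step I expect to be the main obstacle is the rigorous justification of this integration by parts for a general $f\in L^{2}(\mathbb G)$ with $Ef\in L^{2}(\mathbb G)$, i.e. the vanishing of the boundary contributions $r^{Q}|f|^{2}$ as $r\to0$ and $r\to\infty$. This is most cleanly handled by a density argument: one truncates $f$ by smooth radial cut-offs supported away from the origin and from infinity, mollifies to land in $C_{0}^{\infty}(\mathbb G\backslash\{0\})$, and checks that the commutator terms $f\,E\chi$ produced by the cut-offs tend to zero in $L^{2}$. Since $\chi$ depends only on $|x|$, one has $E\chi=|x|\chi'(|x|)$, and choosing logarithmic cut-offs makes these error terms small using $Ef\in L^{2}(\mathbb G)$; passing to the limit in the already-established identity \eqref{ECrel} for the approximants then yields the full statement.
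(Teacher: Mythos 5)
Your proposal is correct and follows essentially the same route as the paper: the paper's proof is precisely the substitution $\alpha=-1$ into \eqref{awH} followed by a one-line appeal to density of $C_{0}^{\infty}(\mathbb{G}\backslash\{0\})$ in $L^{2}(\mathbb{G})$. Your additional care with the approximation step --- noting that plain $L^{2}$-density is not quite enough and that one needs convergence of $Ef_k$ as well, handled via logarithmic cut-offs and the commutator terms $f\,E\chi$ --- is a legitimate strengthening of the paper's rather terse density claim, not a different method.
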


\begin{proof}
Taking $\alpha=-1$, from \eqref{awH} we obtain
\eqref{ECrel} for any $f\in C_{0}^{\infty}(\mathbb{G}\backslash\{0\})$. Since $C_{0}^{\infty}(\mathbb{G}\backslash\{0\})$ is dense in $L^{2}(\mathbb{G})$, this implies that \eqref{ECrel} is also true on
$L^{2}(\mathbb{G})$ by density. The proof is complete.
\end{proof}

Simply by dropping the positive term in the right hand side,  \eqref{ECrel} implies
\begin{cor} For any $f\in L^{2}(\mathbb{G})$ with $Ef\in L^{2}(\mathbb{G})$
\begin{equation}
\left\|f
\right\|_{L^{2}(\mathbb{G})}\leq\frac{2}{Q}\left\|E f\right\|_{L^{2}(\mathbb{G})},
\end{equation}
with the best constant $\frac{2}{Q}$.
\end{cor}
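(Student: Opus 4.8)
The plan is to derive the inequality directly from the Proposition and then treat the optimality of the constant as the genuine content. For the inequality itself, I would start from the identity \eqref{ECrel} and simply discard the manifestly nonnegative remainder term $\left\|Ef+\frac{Q}{2}f\right\|^{2}_{L^{2}(\mathbb{G})}\geq 0$. This leaves
\begin{equation*}
\left(\frac{Q}{2}\right)^{2}\|f\|^{2}_{L^{2}(\mathbb{G})}\leq \|Ef\|^{2}_{L^{2}(\mathbb{G})},
\end{equation*}
and taking square roots of both nonnegative sides yields $\|f\|_{L^{2}(\mathbb{G})}\leq\frac{2}{Q}\|Ef\|_{L^{2}(\mathbb{G})}$, which is the claimed bound. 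Since \eqref{ECrel} already holds for all $f\in L^{2}(\mathbb{G})$ with $Ef\in L^{2}(\mathbb{G})$, no additional density step is needed here.

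For the sharpness of the constant $\frac{2}{Q}$, I would first observe from \eqref{ECrel} that equality would force the remainder to vanish, i.e.\ $Ef=-\frac{Q}{2}f$; by Lemma \ref{L:Euler} this means $f$ is positively homogeneous of degree $-\frac{Q}{2}$, and such an $f$ cannot lie in $L^{2}(\mathbb{G})$ unless $f=0$ (in polar coordinates \eqref{EQ:polar} its squared radial density is proportional to $r^{-1}$, whose integral diverges at both ends). Thus the constant is never attained by a nonzero function, and sharpness must be exhibited through an approximating sequence rather than an extremizer.

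To build such a sequence I would work with radial functions $\phi(|x|)$, for which $Ef=r\phi'(r)$ and, by \eqref{EQ:polar}, $\|f\|^{2}_{L^{2}(\mathbb{G})}=\sigma(\wp)\int_{0}^{\infty}|\phi(r)|^{2}r^{Q-1}\,dr$ while $\|Ef\|^{2}_{L^{2}(\mathbb{G})}=\sigma(\wp)\int_{0}^{\infty}|\phi'(r)|^{2}r^{Q+1}\,dr$, where $\sigma(\wp)$ is the total mass of the measure on the unit sphere. The formal extremizer $\phi(r)=r^{-Q/2}$ satisfies $r\phi'(r)=-\frac{Q}{2}\phi(r)$ pointwise, so I would truncate it: take $\phi_{\varepsilon}(r)=r^{-Q/2}$ on $[\varepsilon,1/\varepsilon]$ and glue in smooth compactly supported cutoffs near $0$ and $\infty$. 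On the bulk interval the two integrands reproduce the equality case exactly, so both norms grow like $\log(1/\varepsilon)$, whereas the contributions from the gluing regions remain bounded as $\varepsilon\to 0$; consequently the ratio $\|f_{\varepsilon}\|^{2}_{L^{2}(\mathbb{G})}/\|Ef_{\varepsilon}\|^{2}_{L^{2}(\mathbb{G})}$ tends to $\frac{4}{Q^{2}}$, showing that $\frac{2}{Q}$ cannot be replaced by any smaller constant.

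The main obstacle is this last step: verifying that the cutoff contributions are genuinely of lower order. This requires a careful but routine estimate confirming that the tail integrals over the gluing regions stay $O(1)$ while the bulk integrals diverge logarithmically, so that the quotient converges to $\frac{4}{Q^{2}}$. This is the standard mechanism behind sharpness of Hardy-type constants, here transplanted to the homogeneous-group setting via the polar decomposition \eqref{EQ:polar}.
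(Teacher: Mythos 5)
Your derivation of the inequality itself is exactly the paper's: drop the manifestly nonnegative remainder $\left\|Ef+\frac{Q}{2}f\right\|^{2}_{L^{2}(\mathbb{G})}$ in \eqref{ECrel} and take square roots. Where you genuinely diverge is on the optimality of $\frac{2}{Q}$. The paper offers no separate argument for this corollary; its only optimality discussion (in the proof of Corollary \ref{waHardy}) establishes \emph{non-attainment} --- equality would force $Ef=-\frac{Q}{2}f$, hence by Lemma \ref{L:Euler} homogeneity of degree $-\frac{Q}{2}$, which is incompatible with membership in $L^{2}(\mathbb{G})$ --- and then asserts sharpness from that. Strictly speaking, non-attainment by a nonzero function does not by itself exclude that a smaller constant might work, so your additional step is the part that actually pins down optimality: truncating the formal extremizer $r^{-Q/2}$ to $[\varepsilon,1/\varepsilon]$ and checking, via the polar decomposition \eqref{EQ:polar}, that both bulk integrands reduce to constant multiples of $r^{-1}$, so that $\|f_{\varepsilon}\|^{2}_{L^{2}(\mathbb{G})}\sim 2\sigma(\wp)\log(1/\varepsilon)$ and $\|Ef_{\varepsilon}\|^{2}_{L^{2}(\mathbb{G})}\sim \frac{Q^{2}}{4}\cdot 2\sigma(\wp)\log(1/\varepsilon)$ while the gluing regions contribute $O(1)$. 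The computation is correct ($Ef=r\phi'(r)$ for radial $f=\phi(|x|)$, and the weights $r^{Q-1}$ and $r^{Q+1}$ are the right ones), and the sequence is admissible for the corollary as stated since $f_{\varepsilon}$ and $Ef_{\varepsilon}$ are plainly in $L^{2}(\mathbb{G})$ even when the quasi-norm is not smooth. In short: same mechanism as the paper for the inequality, but a more complete --- and logically necessary --- argument for the sharpness claim, which the paper leaves implicit.
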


\subsection{The case $\alpha=0$.}
In this case \eqref{awH} gives the equality
\begin{equation}\label{47-0}
\left\|\mathcal{R} f\right\|^{2}_{L^{2}(\mathbb{G})}=\left(\frac{Q-2}{2}\right)^{2}\left\|\frac{1}{|x|}f
\right\|^{2}_{L^{2}(\mathbb{G})}+\left\|\mathcal{R} f+\frac{Q-2}{2|x|}f
\right\|^{2}_{L^{2}(\mathbb{G})}.
\end{equation}
Now by dropping the nonnegative last term in \eqref{47-0} we immediately obtain a version of Hardy's inequality on $\mathbb{G}$ (see \cite{Ruzhansky-Suragan:Layers}- \cite{Ruzhansky-Suragan:uncertainty} for weighted $L^{p}$, critical, higher order cases and their applications in different settings):
\begin{equation}\label{47-1}
\left\|\frac{1}{|x|}f
\right\|_{L^{2}(\mathbb{G})}\leq\frac{2}{Q-2}\left\|\mathcal{R} f\right\|_{L^{2}(\mathbb{G})},
\end{equation}
again with $\frac{2}{Q-2}$ being the best constant.
Note that in comparison to stratified (Carnot) group versions, here the constant is best
for any homogeneous quasi-norm $|\cdot|$.

In the Abelian case ${\mathbb G}=(\mathbb R^{n},+)$, $n\geq 3$, we have
$Q=n$, $e(x)=x=(x_{1},\ldots,x_{n})$, so for any quasi-norm $|\cdot|$ on $\mathbb R^{n}$ it implies
the new inequality:
\begin{equation}\label{Hardy-r}
\left\|\frac{f}{|x|}\right\|_{L^{2}(\mathbb{R}^{n})}\leq \frac{2}{n-2}
\left\|\frac{df}{d|x|}\right\|_{L^{2}(\mathbb{R}^{n})},
\end{equation}
which in turn, by using Schwarz's inequality with the standard Euclidean distance $\|x\|=\sqrt{x^{2}_{1}+\ldots+x^{2}_{n}}$, implies the classical Hardy inequality for $\mathbb{G}\equiv\mathbb{R}^{n}$:
\begin{equation*}\label{Hardy}
\left\|\frac{f}{\|x\|}\right\|_{L^{2}(\mathbb{R}^{n})}\leq
\frac{2}{n-2}\left\|\nabla f\right\|_{L^{2}(\mathbb{R}^{n})},
\end{equation*}
for all $f\in C_{0}^{\infty}(\mathbb{R}^{n}\backslash\{0\}).$
When $|x|\equiv\|x\|$ the remainder terms for \eqref{Hardy-r} have been analysed by Ioku, Ishiwata and Ozawa
\cite{IIO:Lp-Hardy}, see also Machihara, Ozawa and Wadade \cite{MOW:Hardy-Hayashi} as well as \cite{IIO}.

The inequality \eqref{47-1} implies the following Heisenberg-Pauli-Weyl type  uncertainly principle on homogeneous groups (see e.g. \cite{Ricci15}, \cite{Ruzhansky-Suragan:Hardy} and \cite{Ruzhansky-Suragan:Layers} for versions of Abelian and stratified groups):

\begin{prop}\label{Luncertainty}
Let $\mathbb{G}$ be a homogeneous group of homogeneous dimension
 $Q\geq 3$.
Then for each $f\in C^{\infty}_{0}(\mathbb{G}\backslash\{0\})$ and any homogeneous quasi-norm $|\cdot|$ on $\mathbb{G}$ we have
\begin{equation}\label{UP1}
\left\|f\right\|^{2}_{L^{2}(\mathbb{G})}
\leq\frac{2}{Q-2}\left\|\mathcal{R} f\right\|_{L^{2}(\mathbb{G})}\left\||x| f\right\|_{L^{2}(\mathbb{G})}.
\end{equation}
\end{prop}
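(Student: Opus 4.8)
The plan is to obtain the uncertainty principle \eqref{UP1} as a direct consequence of the $L^{2}$-Hardy inequality \eqref{47-1} proved above, combined with the Cauchy--Schwarz inequality. The mechanism is the classical Heisenberg--Pauli--Weyl trick: one inserts a compensating factor of $|x|$ and $|x|^{-1}$ into the $L^{2}$-norm of $f$ so that one factor can be controlled by the weighted Hardy term and the other by the weighted moment $\||x|f\|_{L^{2}}$.

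First I would rewrite the $L^{2}$-norm of $f$ by factoring the integrand as
\begin{equation*}
\|f\|^{2}_{L^{2}(\mathbb{G})}=\int_{\mathbb{G}}|f(x)|^{2}\,dx=\int_{\mathbb{G}}\frac{|f(x)|}{|x|}\,\big(|x|\,|f(x)|\big)\,dx .
\end{equation*}
This factorisation is legitimate because $f\in C^{\infty}_{0}(\mathbb{G}\backslash\{0\})$, so the integrand is supported away from the origin where $|x|>0$, and each of the resulting norms is finite. I would then apply the Cauchy--Schwarz inequality to the two factors $\tfrac{|f|}{|x|}$ and $|x|\,|f|$ to get
\begin{equation*}
\|f\|^{2}_{L^{2}(\mathbb{G})}\leq\left\|\frac{f}{|x|}\right\|_{L^{2}(\mathbb{G})}\big\||x|\,f\big\|_{L^{2}(\mathbb{G})} .
\end{equation*}

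Finally I would estimate the first factor on the right-hand side by the Hardy inequality \eqref{47-1}, which is available precisely under the hypothesis $Q\geq 3$ and for an arbitrary homogeneous quasi-norm $|\cdot|$, namely $\big\|\tfrac{f}{|x|}\big\|_{L^{2}(\mathbb{G})}\leq\frac{2}{Q-2}\|\mathcal{R}f\|_{L^{2}(\mathbb{G})}$. Substituting this bound into the previous display immediately yields
\begin{equation*}
\|f\|^{2}_{L^{2}(\mathbb{G})}\leq\frac{2}{Q-2}\|\mathcal{R}f\|_{L^{2}(\mathbb{G})}\big\||x|\,f\big\|_{L^{2}(\mathbb{G})},
\end{equation*}
which is exactly \eqref{UP1}.

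I do not expect any genuine obstacle in this argument: it is a one-line combination of Cauchy--Schwarz with the already-established inequality \eqref{47-1}, and the only points deserving care are purely bookkeeping ones. Specifically, one must check that the product decomposition $|f|^{2}=(|f|/|x|)(|x|\,|f|)$ is well defined and that all three norms appearing are finite; both are guaranteed by $f$ being smooth and compactly supported in $\mathbb{G}\backslash\{0\}$, so that the singularity of $|x|^{-1}$ at the origin is never encountered. It is also worth emphasising that, since \eqref{47-1} holds for an arbitrary homogeneous quasi-norm, the resulting uncertainty principle \eqref{UP1} inherits the same generality, with no appeal to any Euclidean structure.
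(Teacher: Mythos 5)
Your proof is correct and follows essentially the same route as the paper: both arguments combine the Hardy inequality \eqref{47-1} with the Cauchy--Schwarz (H\"older) inequality applied to the factorisation $|f|^{2}=\frac{|f|}{|x|}\cdot|x||f|$, differing only in the order in which the two steps are applied.
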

\begin{proof}
From the inequality \eqref{47-1} we get
$$
\left(\int_{\mathbb{G}}\left|\mathcal{R}f\right|^{2}dx\right)^{\frac{1}{2}}\left(\int_{\mathbb{G}}|x|^{2}
|f|^{2}dx\right)^{\frac{1}{2}}\geq$$
 $$\frac{Q-2}{2}\left(\int_{\mathbb{G}}
\frac{|f|^{2}}{|x|^{2}}\,dx\right)^{\frac{1}{2}}\left(\int_{\mathbb{G}}|x|^{2}
|f|^{2}dx\right)^{\frac{1}{2}}
\geq\frac{Q-2}{2}\int_{\mathbb{G}}
|f|^{2}dx,$$
where we have used the H\"older inequality in the last line.
This shows \eqref{UP1}.
\end{proof}

In the Abelian case ${\mathbb G}=(\mathbb R^{n},+)$, we have
$Q=n$, $e(x)=x$, so that \eqref{UP1} implies
the uncertainly principle with any quasi-norm $|x|$:
\begin{equation}\label{UPRn-r}
\left(\int_{\mathbb R^{n}}
 |u(x)|^{2} dx\right)^{2}
 \\ \leq\left(\frac{2}{n-2}\right)^{2}\int_{\mathbb R^{n}}\left|\frac{du(x)}{d|x|} \right|^{2}dx
\int_{\mathbb R^{n}} |x|^{2} |u(x)|^{2}dx,
\end{equation}
which in turn implies
the classical
uncertainty principle for $\mathbb{G}\equiv\mathbb R^{n}$ with the standard Euclidean distance $\|x\|$:
\begin{equation}
\left(\int_{\mathbb R^{n}}
 |u(x)|^{2} dx\right)^{2}
 \\ \leq\left(\frac{2}{n-2}\right)^{2}\int_{\mathbb R^{n}}|\nabla u(x)|^{2}dx
\int_{\mathbb R^{n}} \|x\|^{2} |u(x)|^{2}dx,
\end{equation}
which is the Heisenberg-Pauli-Weyl uncertainly principle on $\mathbb R^{n}$.

\subsection{The case $\alpha=1$.}
If $\alpha=1$, \eqref{awH} gives the equality
\begin{equation}\label{47-11}
\left\|\frac{1}{|x|}\mathcal{R} f\right\|^{2}_{L^{2}(\mathbb{G})}=\left(\frac{Q-4}{2}\right)^{2}\left\|\frac{f}{|x|^{2}}
\right\|^{2}_{L^{2}(\mathbb{G})}
\\+\left\|\frac{1}{|x|}\mathcal{R} f+\frac{Q-4}{2|x|^{2}}f
\right\|^{2}_{L^{2}(\mathbb{G})}.
\end{equation}
Then \eqref{47-11}
implies the estimate
\begin{equation}\label{awHardyeq}
\left\|\frac{f}{|x|^{2}}
\right\|_{L^{2}(\mathbb{G})}\leq
\frac{2}{Q-4}\left\|\frac{1}{|x|}\mathcal{R} f\right\|_{L^{2}(\mathbb{G})}, \quad Q\geq 5,
\end{equation}
again with $\frac{2}{Q-4}$ being the best constant.

In the Abelian case ${\mathbb G}=(\mathbb R^{n},+)$, $n\geq 5$, we have
$Q=n$, $e(x)=x=(x_{1},\ldots,x_{n})$, so for any homogeneous quasi-norm $|\cdot|$ on $\mathbb R^{n}$ it implies
the new inequality:
\begin{equation}
\left\|\frac{f}{|x|^{2}}\right\|_{L^{2}(\mathbb{R}^{n})}\leq \frac{2}{n-4}
\left\|\frac{1}{|x|} \frac{df}{d|x|}\right\|_{L^{2}(\mathbb{R}^{n})},
\end{equation}
which in turn, again by using Schwarz's inequality with the standard Euclidean distance $\|x\|=\sqrt{x^{2}_{1}+\ldots+x^{2}_{n}}$, implies the weighted Hardy inequality for $\mathbb{G}\equiv\mathbb{R}^{n}$:
\begin{equation*}
\left\|\frac{f}{\|x\|^{2}}\right\|_{L^{2}(\mathbb{R}^{n})}\leq
\frac{2}{n-4}\left\|\frac{1}{\|x\|}\nabla f\right\|_{L^{2}(\mathbb{R}^{n})},
\end{equation*}
for all $f\in C_{0}^{\infty}(\mathbb{R}^{n}\backslash\{0\}).$

\end{document}